\numberwithin{equation}{section}
\newtheorem{theorem}{Theorem}[section]
\newtheorem{corollary}[theorem]{Corollary}
\newtheorem{thm}{Theorem}
\theoremstyle{definition}
\newtheorem*{remark}{Remark}
\begin{document}
\title[Dirichlet series with Riemann's FE and real zeros of Dirichlet {\textit{L}}-functions]{Dirichlet series with periodic coefficients, Riemann's functional equation and real zeros of Dirichlet {\textit{L}}-functions}
\author[T.~Nakamura]{Takashi Nakamura}
\address[T.~Nakamura]{Department of Liberal Arts, Faculty of Science and Technology, Tokyo University of Science, 2641 Yamazaki, Noda-shi, Chiba-ken, 278-8510, Japan}
\email{nakamuratakashi@rs.tus.ac.jp}
\urladdr{https://sites.google.com/site/takashinakamurazeta/}
\subjclass[2010]{Primary 11M20, 11M26}
\keywords{Dirichlet series with periodic coefficients, real zeros of Dirichlet {\textit{L}}-functions, Riemann's functional equation (Hamburger's theorem)}
\maketitle

\begin{abstract}
In this paper, we give Dirichlet series with periodic coefficients that have Riemann's functional equation and real zeros of Dirichlet $L$-functions. The details are as follows. 
Let $L(s,\chi)$ be the Dirichlet $L$-function and $G(\chi)$ be the Gauss sum associate with a primitive Dirichlet character $\chi$ (${\rm{mod}} \,\, q$). Put $f (s,\chi) :=  q^s L(s,\chi) + i^{-\kappa (\chi)} G(\chi) L(s,\overline{\chi})$, where $\overline{\chi}$ is the complex conjugate of $\chi$ and $\kappa (\chi) :=(1-\chi (-1))/2$. Then we prove that $f (s,\chi)$ satisfies Riemann's functional equation appearing in Hamburger's theorem if $\chi$ is even. In addition, we show that $f (\sigma,\chi) \ne 0$ all $\sigma \ge 1$. Moreover, we prove that $f(\sigma,\chi) \ne 0$ for all $1/2 \le \sigma < 1$ if and only if $L(\sigma,\chi) \ne 0$ for all $1/2 \le \sigma < 1$. When $\chi$ is real, all zeros of $f(s,\chi)$ with $\Re (s) >0$ are on the line $\sigma =1/2$ if and only if GRH for $L(s,\chi)$ is true. However, $f (s,\chi)$ has infinitely many zeros off the critical line $\sigma =1/2$ if $\chi$ is non-real.
\end{abstract}

\section{Introduction}

\subsection{The Riemann zeta function and the Dirichlet {\textit{L}}-function}
Let $s$ be a complex variable expressed as $s = \sigma +it$, where $\sigma,t \in {\mathbb{R}}$. Then, the Riemann zeta function is defined by the series
\[
\zeta (s) := \sum_{n=1}^\infty \frac{1}{n^s}, \qquad \sigma >1.
\]
It is widely known that $\zeta (s)$ is continued meromorphically and has a simple pole at $s=1$ with residue $1$. The Dirichlet $L$-function is defined by the series
\[
L(s,\chi) := \sum_{n=1}^\infty \frac{\chi (n)}{n^s}, \qquad \sigma >1,
\]
where $\chi (n)$ is a Dirichlet character (${\rm{mod}} \,\, q$). The Dirichlet $L$-function $L(s,\chi)$ can be analytically continued to the whole complex plane to a holomorphic function if $B_0(\chi) : = \sum_{r=0}^{q-1} \chi (r) /q =0$, otherwise to a meromorphic function with a simple pole, at $s=1$, with residue $B_0(\chi)$ (see for example \cite[Corollary 10.2.3]{Cohen}). For simplicity, put
\[
\Gamma_{\! \pi} (s) : = \frac{\Gamma (s)}{(2\pi )^s}, \qquad
\Gamma_{\!\! \rm{cos}} (s) : = 2 \Gamma_{\! \pi} (s) \cos \Bigl( \frac{\pi s}{2} \Bigr),  \qquad 
\Gamma_{\!\! \rm{sin}} (s) : = 2 \Gamma_{\! \pi} (s) \sin \Bigl( \frac{\pi s}{2} \Bigr) .
\]
Then, the Riemann zeta function $\zeta (s)$ satisfies Riemann's functional equation 
\begin{equation}\label{eq:RZfe}
\zeta(1-s) = \Gamma_{\!\! \rm{cos}} (s) \zeta(s)
\end{equation}
(see for example \cite[(2.1.8)]{Tit}). When $\chi$ is a primitive character (${\rm{mod}} \,\, q$), the Dirichlet $L$-function $L(s,\chi)$ satisfies the functional equation
\begin{equation}\label{eq:DLfe}
L(1-s, \chi) = q^{s-1} \Gamma_{\! \pi} (s) \bigl( e^{-\pi is/2} + \chi(-1) e^{\pi is/2} \bigr) G(\chi) L(s,\overline{\chi}),
\end{equation}
where $\overline{\chi}$ is the complex conjugate of $\chi$ and the Gauss sum $G(\chi)$ is defined as $G(\chi) := \sum_{r=1}^q \chi (r) e^{2\pi i r/q}$ (see for example \cite[Theorem 12.11]{Apo}).

Next, we review some of the standard conjectures and facts on zeros of $\zeta (s)$ and $L(s,\chi)$. All real zeros of $\zeta(s)$, so-called trivial zeros, are simple and only at the negative even integers. The famous Riemann hypothesis is a conjecture that the Riemann zeta function $\zeta(s)$ has its non-real zeros only on the critical line $\sigma =1/2$. When $\chi$ is an even primitive character, then the only zeros of $L(s,\chi)$ with $\Re(s) < 0$ are at the negative even integers. If $\chi$ is an odd primitive character, then the only zeros of $L(s,\chi)$ with $\Re(s) < 0$ are at the negative odd integers. Note that $\zeta (s)$ and $L(s, \chi)$ do not vanish when $\sigma >1$ by their Euler products. The generalized Riemann hypothesis asserts that, for every Dirichlet character $\chi$ and every complex number $s$ with $\Re (s) > 0$ and $L(s,\chi) = 0$, then the real part of $s$ is 1/2. Note that the existence of a real zero at $s=1/2$ is not prohibited by the GRH. 

\subsection{Dirichlet series with periodic coefficients}
For $\sigma >1$, define two Dirichlet $L$-functions  (${\rm{mod}} \,\, 5$) by
\[
L_1 (s) := \frac{1}{1^s} + \frac{i}{2^s} - \frac{i}{3^s} - \frac{1}{4^s} + \frac{1}{6^s} + \cdots , \qquad 
L_2 (s) := \frac{1}{1^s} - \frac{i}{2^s} + \frac{i}{3^s} - \frac{1}{4^s} + \frac{1}{6^s} + \cdots .
\]
And we define real numbers $0 < \theta < \pi /2$ and $\xi>0$ by
\[
\tan \theta := \xi, \qquad \xi := \bigl( 5^{1/2} -1 \bigr)^{-1} \bigl( (10 - 2 \cdot 5^{1/2})^{1/2} - 2 \bigr) .
\]
Then, the Davenport-Heilbronn function (see \cite[Chapter 10.25]{Tit}) is defined by
\begin{equation*}\label{eq:defDH}
f(s) := \frac{\sec \theta}{2} \Bigl( e^{-i \theta} L_1 (s) + e^{i \theta} L_2 (s) \Bigr)  .
\end{equation*}

We have the following for the Davenport-Heilbronn function $f(s)$ (see \cite[Section 1]{BS}, \cite[Section 5]{BG} or \cite[Chapter 10.25]{Tit}). It should be emphasised that the gamma factor of the functional equation (\ref{eq:oDHfe}) below is similar to that of (\ref{eq:RZfe}) but definitely different from that of (\ref{eq:RZfe}). More precisely, the Riemann zeta function $\zeta (s)$ has trivial zeros at the negative {\sc{even}} integers but the Davenport-Heilbronn function $f(s)$ has trivial zeros at the negative {\sc{odd}} integers.

\begin{thm}
The function $f(s)$ satisfies the functional equation
\begin{equation}\label{eq:oDHfe}
f(1-s) = 5^{s-1/2} \Gamma_{\!\! \rm{sin}} (s) f(s) .
\end{equation}
Furthermore, the function $f(s)$ has an infinity of zeros in the half-plane $\sigma >1$. 
\end{thm}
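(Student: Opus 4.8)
\medskip
\noindent\emph{Proof proposal.}
The plan for (\ref{eq:oDHfe}) is to derive it from the functional equation (\ref{eq:DLfe}) together with one explicit Gauss sum evaluation. First I would recognise that $L_1(s)=L(s,\chi)$ and $L_2(s)=L(s,\overline{\chi})$, where $\chi$ is the primitive character modulo $5$ with $\chi(2)=i$; it is odd, since $\chi(-1)=\chi(4)=\chi(2)^2=-1$, so in particular $L_1$, $L_2$ and hence $f$ are entire. Applying (\ref{eq:DLfe}) with $q=5$ and $\chi(-1)=-1$, and using $e^{-\pi is/2}-e^{\pi is/2}=-2i\sin(\pi s/2)$ together with $\Gamma_{\!\! \rm{sin}}(s)=2\Gamma_{\! \pi}(s)\sin(\pi s/2)$, one obtains
\begin{equation*}
L(1-s,\chi)=-i\,5^{s-1}\,\Gamma_{\!\! \rm{sin}}(s)\,G(\chi)\,L(s,\overline{\chi}),\qquad
L(1-s,\overline{\chi})=-i\,5^{s-1}\,\Gamma_{\!\! \rm{sin}}(s)\,G(\overline{\chi})\,L(s,\chi).
\end{equation*}
Inserting these into $f(1-s)=\tfrac{1}{2}\sec\theta\,\bigl(e^{-i\theta}L_1(1-s)+e^{i\theta}L_2(1-s)\bigr)$ reduces the statement to the phases of $G(\chi)$ and $G(\overline{\chi})$.

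The crux is therefore the Gauss sum $G(\chi)$. Pairing $r$ with $5-r$ gives $G(\chi)=\sum_{r=1}^{4}\chi(r)e^{2\pi ir/5}=-2\sin(\pi/5)+2i\sin(2\pi/5)$; hence $|G(\chi)|=\sqrt{5}$ (as it must be, $\chi$ being primitive modulo $5$) and $G(\chi)/i=2\sin(2\pi/5)+2i\sin(\pi/5)$ lies in the first quadrant, so, using $\sin(2\pi/5)=2\sin(\pi/5)\cos(\pi/5)$ and $\cos(\pi/5)=(1+\sqrt{5})/4$,
\begin{equation*}
\arg\bigl(G(\chi)/i\bigr)=\arctan\frac{\sin(\pi/5)}{\sin(2\pi/5)}=\arctan\frac{\sqrt{5}-1}{2}.
\end{equation*}
On the other hand, clearing denominators in $\tan2\theta=\frac{2\tan\theta}{1-\tan^2\theta}=\frac{\sqrt{5}-1}{2}$ gives $(\sqrt{5}-1)\xi^2+4\xi-(\sqrt{5}-1)=0$, and $\xi$ is precisely the positive root of this quadratic; since $\xi<1$ we have $2\theta\in(0,\pi/2)$, so $\tan2\theta=(\sqrt{5}-1)/2$ forces $G(\chi)=i\sqrt{5}\,e^{2i\theta}$, whence $G(\overline{\chi})=-\overline{G(\chi)}=i\sqrt{5}\,e^{-2i\theta}$. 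Then $e^{-i\theta}G(\chi)=i\sqrt{5}\,e^{i\theta}$ and $e^{i\theta}G(\overline{\chi})=i\sqrt{5}\,e^{-i\theta}$, so the bracket above recombines to $i\sqrt{5}\,\bigl(e^{-i\theta}L_1(s)+e^{i\theta}L_2(s)\bigr)$; together with $(-i)(i\sqrt{5})\,5^{s-1}=5^{s-1/2}$ this yields $f(1-s)=5^{s-1/2}\,\Gamma_{\!\! \rm{sin}}(s)\,f(s)$, which is (\ref{eq:oDHfe}).

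For the second assertion I would run the classical argument of Davenport and Heilbronn, as in \cite[Section 10.25]{Tit} (see also \cite{BS,BG}). Since $L_1(s)\neq0$ for $\sigma>1$ by its Euler product, a zero of $f$ with $\sigma>1$ is exactly a solution of $\Phi(s):=\prod_p\frac{1-\overline{\chi}(p)p^{-s}}{1-\chi(p)p^{-s}}=-e^{2i\theta}$, and only the primes $p\equiv\pm2\pmod5$ (those with $\chi(p)=\pm i$) contribute a factor $\neq1$. On a vertical line $\Re s=\sigma_0>1$, as $t$ varies each such factor runs over a circle whose arguments fill the symmetric arc $[-2\arctan p^{-\sigma_0},\,2\arctan p^{-\sigma_0}]$ and whose moduli fill an interval about $1$; since $\sum_{p\equiv\pm2\,(5)}p^{-1}$ diverges, for $\sigma_0$ close enough to $1$ the total range of arguments exceeds $2\pi$ and, by Bohr's value-distribution theory --- made precise through Kronecker's theorem, the numbers $\log p$ being linearly independent over $\mathbb Q$ --- the values $\Phi(\sigma_0+it)$ ($t\in\mathbb R$) are dense in a full annulus about the unit circle; in particular $\Phi(\sigma_0+it_k)\to-e^{2i\theta}$ along some sequence $t_k\to\infty$. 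Finally, applying Littlewood's lemma (or the argument principle) on rectangles $[\sigma_0,2]\times[t_k-c,\,t_k+c]$, together with the standard mean-value and finite-order estimates for $f$, converts these near-solutions into genuine zeros, indeed into $\gg T$ zeros with $1<\sigma<2$ and $0<t<T$. I expect this last step --- from ``$f$ close to $0$'' to ``$f=0$'' --- to be the main obstacle; everything before it is bookkeeping with the Euler product and Kronecker's theorem.
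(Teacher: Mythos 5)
The paper does not prove this theorem at all: it is quoted as a known result with references to \cite[Section 10.25]{Tit}, \cite{BS} and \cite{BG}, so there is no internal proof to compare yours against. Your derivation of the functional equation (\ref{eq:oDHfe}) is correct and complete. The identification $L_1(s)=L(s,\chi)$, $L_2(s)=L(s,\overline{\chi})$ with $\chi$ the odd primitive character mod $5$ determined by $\chi(2)=i$, the reduction via (\ref{eq:DLfe}) to the phase of $G(\chi)$, and the computation $G(\chi)=-2\sin(\pi/5)+2i\sin(2\pi/5)=i\sqrt{5}\,e^{2i\theta}$ all check out; in particular your observation that $\xi$ is exactly the positive root of $(\sqrt{5}-1)\xi^2+4\xi-(\sqrt{5}-1)=0$, i.e.\ that $\tan 2\theta=(\sqrt{5}-1)/2=\arg\bigl(G(\chi)/i\bigr)$ in tangent form, is precisely the point of the otherwise mysterious definition of $\theta$, and the constants recombine correctly to give $5^{s-1/2}\Gamma_{\!\!\rm{sin}}(s)$. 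This is also consistent with the alternative route the paper hints at in its Remarks, where $f(s)=5^{-s}\bigl(Y(s,1/5)+\tan\theta\,Y(s,2/5)\bigr)$ and (\ref{eq:YaOfe}) would be used instead of (\ref{eq:DLfe}).

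For the second assertion your outline is the correct classical strategy, but as written it has a genuine gap at exactly the point you flag: density of the values $\Phi(\sigma_0+it)$ near $-e^{2i\theta}$ does not by itself produce zeros of $f$. The standard way to close it has two ingredients. First, one works with a finite truncation of the Euler product over primes $p\equiv\pm 2\pmod 5$, $p\le N$; since $\sum_{p\equiv\pm2\,(5)}p^{-1}$ diverges, for $\sigma_0$ sufficiently close to $1$ and $N$ large the total oscillation of the argument exceeds $2\pi$, and a surjectivity (degree) argument on the torus shows the truncated product actually \emph{attains} the value $-e^{2i\theta}$ at some point of the torus, which Kronecker's theorem realizes approximately as $(p^{-it_0})_{p\le N}$; this exhibits a function with a genuine zero near $\sigma_0+it_0$. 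Second, Rouch\'e's theorem on a small disc, using that the omitted tail of the Euler product is uniformly small for $\sigma\ge\sigma_0>1$ and almost periodicity in $t$, transfers that zero to $f$ itself and yields $\gg T$ such zeros. Littlewood's lemma is not the natural tool here (it counts zeros you already know exist); Rouch\'e against the truncated approximant is. With these two standard steps supplied the argument is complete; since the paper itself supplies nothing, your write-up is already more detailed than the source it would replace.
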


Recently, Vaughan \cite{Vau} studied the following functions. Let 
\[
\lambda (\chi) := \frac{G(\chi)}{i^{\kappa(\chi)}\sqrt{q}},\qquad
\kappa(\chi) := 
\begin{cases}
1 & \chi \mbox{ is odd},\\
0 & \chi \mbox{ is even}.
\end{cases} \qquad
\]
Then we define the functions $V_+(s, \chi)$ and $V_-(s, \chi)$ by
\[
V_+(s, \chi) := \frac{L(s,\chi) + \lambda(\chi) L(s,\overline{\chi})}{1 + \lambda(\chi)} \quad \mbox{and} \quad
V_-(s, \chi) := \frac{L(s,\chi) - \lambda(\chi) L(s,\overline{\chi})}{1 - \lambda(\chi)}
\]
when $\lambda(\chi) \ne -1$ and $\lambda(\chi) \ne 1$, respectively. 
Then, he showed the following (see \cite[Theorems 2.1, 2.2, 3.1 and 3.2]{Vau}). 
\begin{thm}
We have the functional equations
\begin{equation*}
\begin{split}
V_+ (1-s, \chi)  & =   q^{s-1/2} \Gamma_{\!\! \rm{cos}} (s-\kappa(\chi)) V_+(s, \chi), \\ 
V_- (1-s, \chi) & = - q^{s-1/2} \Gamma_{\!\! \rm{cos}} (s-\kappa(\chi)) V_-(s, \chi).
\end{split}
\end{equation*}
Moreover, the function $V_+(s, \chi)$  has $\asymp T$ zeros in the region $\{ s : \sigma > 1, \,\, \lvert t \rvert < T \}$. The same statement also holds for $V_-(s, \chi)$. 
\end{thm}

The existence of complex zeros of $f(s)$ and $V_\pm (s,\chi)$ are proved by the theorem below. Let $N_F (\sigma_1,\sigma_2, T)$ be the number of zeros of a function $F(s)$ in the rectangle $\sigma_1 < \Re (s) < \sigma_2$, $\lvert \Im (s) \rvert \le T$, counted with their multiplicities. Then we have the  following which is proved by Saias and Weingartner \cite[Theorem]{SW}. 
\begin{thm}\label{thm:SW1}
Let $\{ a (n) \}$ be a periodic sequence of complex numbers such that $F(s) := \sum_{n=1}^\infty a(n) n^{-s}$ is not of the form $P(s) L(s,\chi)$, where $P(s)$ is a Dirichlet polynomial and $L(s,\chi)$ is a Dirichlet $L$-function. Then there exists a positive number $\eta$ such that, for all real numbers $\sigma_1$ and $\sigma_2$ with $1/2 < \sigma_1 < \sigma_2 < 1+\eta$, there exist positive numbers $c_1$, $c_2$, and $T_0$ such that for all $T \ge T_0$ we have
\[
c_1 T \le N_F (\sigma_1,\sigma_2, T) \le c_2 T.
\]
\end{thm}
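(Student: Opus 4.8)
The plan is to combine the structural decomposition of a periodic Dirichlet series into Dirichlet $L$-functions with a Littlewood-lemma / value-distribution argument, converting a mean-value statement for $\log|F|$ into a two-sided count of zeros. First I would fix $q$ as the period of $\{a(n)\}$ and write $F(s) = q^{-s}\sum_{r=1}^{q} a(r)\,\zeta(s,r/q)$, which continues $F$ meromorphically to $\mathbb{C}$ with at most a simple pole at $s=1$. By finite Fourier analysis modulo $q$, equivalently by the relations (\ref{eq:k1}) and (\ref{eq:qqgyaku}), I would then regroup this as a finite sum
\[
F(s) = \sum_{l=1}^{m} P_l(s)\,L(s,\chi_l),
\]
where the $\chi_l$ are pairwise inequivalent primitive characters and the $P_l$ are nonzero Dirichlet polynomials. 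The hypothesis that $F$ is not of the shape $P(s)L(s,\chi)$ is then exactly the statement $m\ge 2$; this is the only use of the hypothesis, but it is decisive.

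Next I would introduce the mean $m(\sigma) := \lim_{T\to\infty}(2T)^{-1}\int_{-T}^{T}\log|F(\sigma+it)|\,dt$. This limit exists because $F$ is almost periodic on each vertical line with $\sigma>1/2$ (uniformly for $\sigma>1$, and in the $B^2$-sense for $1/2<\sigma\le 1$), and $m$ is a convex function of $\sigma$, with second derivative equal, up to a positive constant, to the density of zeros on the line $\Re(s)=\sigma$; thus $m$ is affine on an interval precisely where $F$ has no zeros. Using the growth of the Hurwitz zeta function together with Phragmén--Lindel\"of, $F$ has polynomial growth in vertical strips and $F(s)=a(n_0)n_0^{-s}(1+o(1))$ as $\sigma\to+\infty$, where $n_0$ is the least index with $a(n_0)\ne 0$. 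Finiteness and convexity of $m$ give, by a standard application of Littlewood's lemma (or Jensen over a family of discs), the upper bound $N_F(\sigma_1,\sigma_2,T)\le c_2 T$ for any fixed strip; this direction is routine.

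For the lower bound, Littlewood's lemma yields $2\pi\sum_{\rho}(\sigma_\rho-\sigma_1)\sim 2T\,\bigl(m(\sigma_1)-m(\sigma_2)\bigr)$ as $T\to\infty$, the horizontal contributions being $o(T)$. Since each summand is at most $\sigma_2-\sigma_1$, this gives
\[
N_F(\sigma_1,\sigma_2,T)\ge (\sigma_2-\sigma_1)^{-1}\sum_{\rho}(\sigma_\rho-\sigma_1)\gg T
\]
as soon as $m(\sigma_1)>m(\sigma_2)$, i.e. as soon as $m$ is strictly decreasing, equivalently $F$ has positive zero density somewhere in $(\sigma_1,\sigma_2)$. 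For the subrange $1/2<\sigma_1<\sigma_2<1$ one may instead invoke Theorem \ref{thm:KK1} directly: once $m\ge 2$ it already delivers $N_F\gg T$. The genuinely new input is what is needed to push the range past $\sigma=1$ to some $1+\eta$ and to secure the matching two-sided constants.

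The hard part is therefore to show that $m$ is non-affine on a right-neighbourhood of $1/2$ extending to some $\sigma=1+\eta$. By Bohr--Jessen--Borchsenius theory the values $F(\sigma+it)$ have a limiting distribution $\mu_\sigma$ on $\mathbb{C}$ with $m(\sigma)=\int_{\mathbb{C}}\log|w|\,d\mu_\sigma(w)$, and non-affineness of $m$ is equivalent to $0\in\operatorname{supp}\mu_\sigma$ for $\sigma$ in a subinterval. Because the $\chi_l$ are inequivalent, the $L(\cdot,\chi_l)$ are jointly independent in the value-distribution sense (distinct Euler factors), so with $m\ge 2$ the terms $P_l(s)L(s,\chi_l)$ can cancel down to $0$; this places $0$ in $\operatorname{supp}\mu_\sigma$ throughout a strip that, by convergence of the Euler-type products, extends slightly beyond $\sigma=1$, producing $\eta$, while for large $\sigma$ the leading term dominates and no cancellation occurs. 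Making this quantitative --- establishing the joint value distribution of inequivalent $L$-functions with enough uniformity to force $0\in\operatorname{supp}\mu_\sigma$ and a zero density that is linear in $T$, uniformly up to $1+\eta$ --- is the main obstacle and the technical heart of the argument; note that the single-$L$ case is governed instead by (\ref{eq:DzeroD}), which keeps $m$ affine and explains why the exceptional form $P(s)L(s,\chi)$ must be excluded.
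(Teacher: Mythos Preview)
The paper does not contain a proof of this statement at all: Theorem~\ref{thm:SW1} is quoted verbatim from Saias and Weingartner \cite[Theorem]{SW} as a known result and is used only as a black box in the proof of Theorem~\ref{th:gDH1}. There is therefore nothing in the paper to compare your proposal against.

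That said, your sketch is broadly in the spirit of the actual Saias--Weingartner argument (decomposition into a combination of $L$-functions attached to inequivalent primitive characters, Jensen mean $m(\sigma)$, Littlewood's lemma for the zero count, and Bohr--Jessen value distribution to locate $0$ in the support of the limiting distribution). A few points would need tightening if you were genuinely reproving it: the equivalence you assert between non-affineness of $m$ and $0\in\operatorname{supp}\mu_\sigma$ is not quite the right formulation (one direction needs the positivity of the density of zeros, which is what you are trying to prove); the existence of $m(\sigma)$ for $1/2<\sigma\le 1$ requires more than $B^2$-almost-periodicity of $F$ itself, since $\log|F|$ is what is being averaged; and the production of the explicit right endpoint $1+\eta$ hinges on a quantitative joint universality/denseness statement for the tuple $(L(s,\chi_1),\ldots,L(s,\chi_m))$ just past $\sigma=1$, which is the delicate part of \cite{SW} and which your last paragraph correctly flags as the main obstacle but does not resolve. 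For the purposes of the present paper, however, none of this is needed: the theorem is simply cited.
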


\section{The main theorem and remarks}

\subsection{Main theorem}

As an analogue or improvement of the Davenport-Heilbronn function $f(s)$ or Vaughan functions $V_\pm (s,\chi)$, we define the function
\begin{equation*}\label{eq:defgDH}
f (s,\chi) :=  q^s L(s,\chi) + i^{-\kappa (\chi)} G(\chi) L(s,\overline{\chi}) ,
\end{equation*}
where $\chi$ is a primitive Dirichlet characters (${\rm{mod}} \,\, q$) and $\kappa (\chi) :=(1-\chi (-1))/2$ . Note that all the functions $f(s)$,  $V_\pm(s,\chi)$ and $f(s,\chi)$ are expressed as a sum of two Dirichlet $L$-functions. In the present paper, we show the following which implies that $f(s,\chi)$ have Riemann's functional equation (if $\chi$ is even) and real zeros of Dirichlet {\textit{L}}-functions. 

\begin{theorem}\label{th:gDH1} We have the four statements below;

\item[$\,\, {\bf{(i).}}$] The function $f(s,\chi)$ satisfies Riemann's functional equation 
\begin{equation}\label{eq:gDH1fe}
f(1-s, \chi) = \Gamma_{\!\! \rm{cos}} (s) f(s, \chi)
\end{equation}
if $\chi (-1) =1$. When $\chi (-1) =-1$, the function $f(s,\chi)$ fulfills
\begin{equation}\label{eq:gDH2fe}
f(1-s, \chi) = \Gamma_{\!\! \rm{sin}} (s) f(s, \chi) .
\end{equation}

\item[$\, {\bf{(ii).}}$]
One has $f(\sigma,\chi) \ne 0$ for all $\sigma \ge 1$ and primitive Dirichlet character $\chi$. Moreover, we have $f(\sigma,\chi) \ne 0$ for all $1/2 \le \sigma < 1$ if and only if $L(\sigma,\chi) \ne 0$ for all $1/2 \le \sigma < 1$. 

\item[${\bf{(iii).}}$]
Fix a real Dirichlet character $\chi$. Then all complex zeros of $f(s,\chi)$ with $\Re (s) >0$ are on the vertical line $\sigma =1/2$ if and only if GRH for $L(s,\chi)$ is true.

\item[${\bf{(iv).}}$]
Fix a non-real Dirichlet character $\chi$. Then, there exists a positive number $\eta$ such that, for all real numbers $\sigma_1$ and $\sigma_2$ with $1/2 < \sigma_1 < \sigma_2 < 1+\eta$, there exist positive numbers $C_1$, $C_2$, and $T_0$ such that for all $T \ge T_0$ we have
\[
C_1 T \le N_{f(s, \chi)} (\sigma_1,\sigma_2, T) \le C_2 T,
\]
where $N_{f(s, \chi)} (\sigma_1,\sigma_2, T)$ is the number of zeros of the function $f(s, \chi)$ in the rectangle $\sigma_1 < \Re (s) < \sigma_2$, $\lvert \Im (s) \rvert \le T$, counted with their multiplicities.
\end{theorem}

We remark that the gamma factor of the functional equation (\ref{eq:gDH1fe}) completely coincides with that of Riemann's functional equation (\ref{eq:RZfe}) or (H3) given below. Moreover, the statement (ii) of Theorem \ref{th:gDH1} implies that the real zeros of $f(s,\chi)$ completely coincide with those of $L(s,\chi)$ (there are no theorem on real zeros of $f(s)$ or $V_\pm (s, \chi)$ in \cite[Section 1]{BS}, \cite[Section 5]{BG}, \cite[Section 3]{Vau} or \cite[Chapter 10.25]{Tit}).  However, $f(s,\chi)$ with non-real character has zeros in the half-plane $\Re (s) >1/2$ from (iv) of Theorem \ref{th:gDH1}. Therefore, for any non-real even character $\chi$, the function $f(s,\chi)$ or $g(s,\chi)$ has Riemann's functional equation, real zeros of Dirichlet {\textit{L}}-functions and non-real zeros of Dirichlet series with periodic coefficients not of the form $P(s) L(s,\chi)$ as the title of this paper says. 

In the next subsection, we give some remarks for Riemann's functional equation (\ref{eq:RZfe}). We give proofs of Theorem \ref{th:gDH1} and Corollary \ref{cor:DH2} and some remarks on them in Section 3. 

\subsection{Remarks on Riemann's functional equation}

The following converse theorem on $\zeta (s)$ is widely-known (see also \cite[Chapter 2.13]{Tit}). 
\begin{thm}[Hamburger {\cite[Satz 1]{Ham}}]
Suppose that $F(s)$ satisfies
\begin{equation}
\mbox{$F(s) = \sum_{n=1}^\infty a(n)n^{-s}$, where $a(n) \in {\mathbb{C}}$, converges absolutely for $\sigma >1$}, \tag{H1}
\end{equation}
\begin{equation} 
\mbox{$P(s) F(s)$ is an entire function of finite order for some polynomial $P(s)$}, \tag{H2}
\end{equation}
\begin{equation}
\mbox{$\xi_F (1-s) = \xi_F (s)$, where $\xi_F (s) := \pi^{-s/2} \Gamma (s/2) F(s) \rule{0pt}{2.5ex}$.}
\tag{H3}
\end{equation}
Then, one has $F(s) = C\zeta(s)$, where $C$ is a constant.
\end{thm}
Clearly, the functions $\zeta (s)$ and $f(s,\chi)$ with $\chi (-1)=1$ satisfy the condition (H2) and Riemann's functional equation (H3) from the definitions of them and equalities (\ref{eq:RZfe}) and (\ref{eq:gDH1fe}). However, the functions $L(s,\chi)$ with primitive  characters, $f(s)$ and $V_\pm (s, \chi)$ satisfy (H1) and (H2) but they do not fulfill (H3). It should be mentioned that the functional equation (\ref{eq:gDH1fe}) does not contradict Hamburger's Theorem because $f(s,\chi)$ with $\chi (-1)=1$ does not satisfy (H1). In \cite[Theorem 1]{Kn}, Knopp showed that there are infinitely many linearly independent solutions which satisfy (H2), (H3) and
\begin{equation}
\mbox{$F(s)  = \sum_{n=1}^\infty a(n)n^{-s}$, where $a(n) \in {\mathbb{C}}$, converges absolutely in some half-plane}. \tag{K}
\end{equation}

Knopp gives no explicit representation for the coefficients of $a(n)$ of the Dirichlet series satisfy the condition (K). Using $f(s, \chi)$, we give explicitly some functions satisfy (H1) and (H3) as an analogue or improvement of Knopp's theorem. 
\begin{corollary}\label{cor:DH2}
Put $H(s,q) := (q^s+q^{1-s})^{-1}$ and $g (s,\chi) :=  H (s,q) f (s,\chi)$. Then the function $g(s,\chi)$ with $\chi (-1) =1$ can be expressed as an ordinary Dirichlet series and satisfies Riemann's functional equation 
\begin{equation}\label{eq:gDH1gfe}
g(1-s, \chi) = \Gamma_{\!\! \rm{cos}} (s) g(s, \chi) .
\end{equation}
Namely, the function $g(s,\chi)$ with $\chi (-1) =1$ fulfills (H1), (H3) and 
\begin{equation}
\mbox{$D(s) F(s)$ is an entire function of finite order for some Dirichlet polynomial $D(s)$}. \tag{D}
\end{equation}
\end{corollary}

Let $0< r < q$ be relatively prime integers and put 
\[
Q(s,r/q) := \frac{1}{2 \varphi (q)} \sum_{\chi \!\!\! \mod q} 
\bigl(1+\chi(-1)\bigr) \bigl( \overline{\chi} (r) q^s + G(r, \overline{\chi}) \bigl) L(s,\chi),
\]
where $G(r, \overline{\chi})$ is the generalized Gauss sum defined by $G(r,\overline{\chi}) := \sum_{n=1}^q$ $\overline{\chi}(n)e^{2\pi irn/q}$. In \cite[Corollary 1.5]{NPFE}, it is proved that $H(s,q) Q(s,r/q)$ satisfies (H1), (H3) and (D). It should be emphasised that $g(s,\chi)$ is simpler than $H(s,q) Q(s,r/q)$ because the function $f(s,\chi)$ is expressed as a sum of two Dirichlet $L$-functions.

\section{Proofs}

\subsection{{Proofs of (i) and (iii) of Theorem \ref{th:gDH1}}}

\begin{proof}[Proofs of (\ref{eq:gDH1fe}) and (\ref{eq:gDH2fe})]
Supposes $\chi (-1)= 1$. Then we obtain
\begin{equation}\label{eq:pfgdhf1}
\Gamma_{\!\! \rm{cos}} (s) G(\chi) L(s,\overline{\chi}) = q^{1-s} L(1-s, \chi)
\end{equation}
from (\ref{eq:DLfe}). By replacing $s$ by $1-s$ in (\ref{eq:DLfe}) we have
\[
q^s L(s, \chi) = \Gamma_{\!\! \rm{cos}} (1-s) G(\chi) L(1-s,\overline{\chi}). 
\]
By multiplying both sides  of the equality above by $\Gamma_{\!\! \rm{cos}} (s)$, we obtain
\begin{equation}\label{eq:pfgdhf2}
\Gamma_{\!\! \rm{cos}} (s)  q^s L(s, \chi) = \Gamma_{\!\! \rm{cos}} (s) \Gamma_{\!\! \rm{cos}} (1-s) G(\chi) L(1-s,\overline{\chi}) =
G(\chi) L(1-s,\overline{\chi}) 
\end{equation}
since it holds that
\[
\Gamma_{\!\! \rm{cos}} (s) \Gamma_{\!\! \rm{cos}} (1-s) = 
\frac{2\Gamma (s)}{(2\pi )^s} \cos \Bigl( \frac{\pi s}{2} \Bigr) \cdot \frac{2\Gamma (1-s)}{(2\pi )^{1-s}} \sin \Bigl( \frac{\pi s}{2} \Bigr)
= \frac{\Gamma (s) \Gamma (1-s)}{2\pi} 2 \sin \pi s =1
\]
according to the well-known formulas 
\[
\cos \Bigl( \frac{\pi (1-s)}{2} \Bigr) = \sin \Bigl( \frac{\pi s}{2} \Bigr), \qquad 
\Gamma (s) \Gamma (1-s) = \frac{\pi}{\sin \pi s}. 
\]
Therefore, from (\ref{eq:pfgdhf1}) and (\ref{eq:pfgdhf2}), it holds that
\begin{equation*}
\begin{split}
\Gamma_{\!\! \rm{cos}} (s) f(s, \chi) &= \Gamma_{\!\! \rm{cos}} (s) \bigl( q^s L(s,\chi) + G(\chi) L(s,\overline{\chi}) \bigr) \\
&= G(\chi) L(1-s,\overline{\chi}) + q^{1-s} L(1-s, \chi) =  f(1-s, \chi) 
\end{split}
\end{equation*}
which implies  (\ref{eq:gDH1fe}). Similarly, we can prove (\ref{eq:gDH2fe}), namely, the case $\chi (-1)=-1$.
\end{proof}

\begin{proof}[{Proof of (iii) of Theorem \ref{th:gDH1}}]
Assume that $\chi$ is a real character (${\rm{mod}} \,\, q$). Then we have
\begin{equation}\label{eq:rezepf0}
f (s,\chi) =  \bigl( q^s + i^{-\kappa (\chi)} G(\chi) \bigr) L(s,\chi) . 
\end{equation}
For any real primitive Dirichlet character, all zeros of the factor $q^s + i^{-\kappa (\chi)} G(\chi)$ are on the critical line $\sigma =1/2$ according to the well-known formula
\begin{equation}\label{eq:gaussabs}
\bigl\lvert G(\chi) \bigr\rvert = \sqrt{q} .
\end{equation}
It should be emphasized that this phenomenon is distinctly rare since Dirichlet polynomials $a_0 + a_1 q^{-s}$ have zeros off the vertical line $\sigma =1/2$ for almost all $a_0,a_1 \in {\mathbb{C}}$. Thus, we obtain that all complex zeros of $f(s,\chi)$ with $\Re (s) >0$ are on the vertical line $\sigma =1/2$ if and only if GRH for $L(s,\chi)$ is true. Hence, we have (iii) of Theorem \ref{th:gDH1}.
\end{proof}

\begin{remark}
For $c \in {\mathbb{C}}$, consider the function
\begin{equation}\label{eq:fc}
f_c (s,\chi) := c q^s L(s,\chi) + i^{-\kappa (\chi)} G(\chi) L(s,\overline{\chi})
\end{equation}
as a generalization of $f(s,\chi) = f_1 (s,\chi)$. Clearly, we have 
\[
f_c (s,\chi) =  \bigl( c q^s + i^{-\kappa (\chi)} G(\chi) \bigr) L(s,\chi)
\] when $\chi$ is a real character. By modifying the proof above, we can see that the factor $c q^s + i^{-\kappa (\chi)} G(\chi)$ with $\lvert c \rvert \ne 1$ vanishes for some $\sigma \ne 1/2$ and $t \in {\mathbb{R}}$. Hence, the function $f_c (s,\chi)$ with a real character $\chi$ and $\lvert c \rvert \ne 1$ has complex zeros off the critical line $\sigma =1/2$ even if GRH for $L(s,\chi)$ is true. In this case, we have,
\[
f_c(1-s, \chi) \ne \Gamma_{\!\! \rm{cos}} (s) f_c(s, \chi) \quad \mbox{or} \quad f_c(1-s, \chi) \ne \Gamma_{\!\! \rm{sin}} (s) f_c(s, \chi),
\qquad \lvert c \rvert \ne 1.
\]
Hence, when $\chi$ is real, the functional equations (\ref{eq:gDH1fe}) and (\ref{eq:gDH2fe}) prohibit the existence of complex zeros of $f (s,\chi)$ off the critical line $\sigma =1/2$ under GRH.
\end{remark}

\subsection{{Proofs of (ii) and (iv) of Theorem \ref{th:gDH1}} and Corollary \ref{cor:DH2}}

As usual, we say that two Dirichlet characters $\chi_1,\chi_2$ are non-equivalent if the primitive characters $\chi_1^*,\chi_2^*$ inducing $\chi_1,\chi_2$ are distinct. In \cite[Lemma 8.1]{KaPe}, Kaczorowski and Perelli proved the following. 
\begin{thm}\label{thm:KP1}
For $l= 1, \ldots , m$, let $P_l (s)$ be Dirichlet polynomials and $\chi_l$ be non-equivalent Dirichlet characters such that
\[
\sum_{l=1}^m P_l(s) L(s,\chi_l) = 0 \quad \mbox{identically}.
\]
Then $P_l (s) =0$ identically for $l= 1, \ldots , m$. 
\end{thm}

By using this theorem, we show the statements on non-real zeros of $f(s,\chi)$. 
\begin{proof}[{Proof of (iv) of Theorem \ref{th:gDH1}}]
Let $\chi$ be a non-real Dirichlet character. Then, from Theorem \ref{thm:KP1}, the function $q^{-s} f(s,\chi)$ can not be expressed as $P_0(s) L(s,\chi_0)$, where $P_0(s)$ is a Dirichlet polynomial and $\chi_0$ is a Dirichlet character. Clearly, the Dirichlet series of $L(s,\chi)$ and $q^{-s} G(\chi) L(s,\overline{\chi})$ have periodic coefficients of period $q$ and $q^2$, respectively. Hence, $q^{-s} f(s,\chi)$ is a Dirichlet series with periodic coefficients of period $q^2$ and not of the form $P_0(s) L(s,\chi_0)$. Therefore, by Theorem \ref{thm:SW1}, we have (iv) of Theorem \ref{th:gDH1}.
\end{proof}

We are now in a position to prove the statements on real zeros of $f(s,\chi)$. 

\begin{proof}[{Proof of (ii) of Theorem \ref{th:gDH1}}]
Assume that $L(\sigma_0, \chi) = 0$ for some $\sigma_0 >1/2$. Then, clearly one has $L(\sigma_0, \overline{\chi}) = 0$ and
\[
f(\sigma_0, \chi) = q^s L(\sigma_0, \chi) + i^{-\kappa (\chi)} G(\chi) L(\sigma_0, \overline{\chi}) = 0. 
\]

Conversely, suppose that $L(\sigma, \chi) \ne 0$ for all $\sigma >1/2$. Then, for $\sigma >1/2$, the statement $f(\sigma,\chi) =0$ is equivalent to
\begin{equation}\label{eq:rezepf1}
\frac{L(\sigma, \overline{\chi})}{L(\sigma, \chi)} = - \frac{i^{\kappa (\chi)} q^\sigma}{G(\chi)}.
\end{equation}
The absolute value of the left hand side of (\ref{eq:rezepf1}) is $1$ from 
\begin{equation}\label{eq:absconj1}
\bigl \lvert L(\sigma, \chi) \bigr\rvert = \bigl\lvert \overline{L(\sigma, \chi)} \bigr\rvert = \bigl\lvert L(\sigma, \overline{\chi}) \bigr\rvert .
\end{equation}
On the contrary, the absolute value of the right hand side of (\ref{eq:rezepf1}) is greater than $1$ by (\ref{eq:gaussabs}) if  $\sigma >1/2$. Hence, there are no $\sigma > 1/2$ which satisfies (\ref{eq:rezepf1}). Next suppose that $\sigma =1/2$. Then we have
\[
q^{1/2} L(1/2,\chi) = i^{-\kappa (\chi)} G(\chi) L(1/2,\overline{\chi})
\]
according to the functional equation (\ref{eq:DLfe}). Hence, we obtain
\begin{equation*}
\begin{split}
f(1/2,\chi) &= q^{1/2} L(1/2,\chi) + i^{-\kappa (\chi)} G(\chi) L(1/2,\overline{\chi}) \\ &= 
2  q^{1/2} L(1/2,\chi)  = 2 i^{-\kappa (\chi)} G(\chi) L(1/2,\overline{\chi}) .
\end{split}
\end{equation*}
Therefore, we have $f(\sigma, \chi) \ne 0$ for all $\sigma \ge 1/2$ if $L(\sigma, \chi)$ does not vanish for all $\sigma \ge 1/2$. 

Furthermore, we can easily see that $f(\sigma,\chi) \ne 0$ for all $\sigma \ge 1$ and  primitive Dirichlet character $\chi$ from (\ref{eq:rezepf0}), (\ref{eq:gaussabs}), (\ref{eq:rezepf1}), (\ref{eq:absconj1}) and the fact that $L(\sigma,\chi) \ne 0$ for all $\sigma \ge 1$ which is proved by the existence of the Euler product of $L(s,\chi)$ and the well-known fact $L(1,\chi) \ne 0$ (see \cite[Theorem 10.5.29]{Cohen}). Thus, we obtain (ii) of Theorem \ref{th:gDH1}.
\end{proof}

\begin{remark}
Recall that $f_c (s,\chi)$ is defined by (\ref{eq:fc}) and suppose that $L(\sigma,\chi) \ne 0$ for all $\chi$ and $\sigma \ge 1/2$. By modifying the proof above, for any fixed $\chi_0$ and $\sigma_0 \ge 1/2$, we can find $c_0 \ne 1$ such that $f_{c_0} (\sigma_0,\chi_0) =0$, namely, the function $f_{c_0} (s,\chi_0)$ vanishes on the real line. In this case, we have,
\[
f_c(1-s, \chi) \ne \Gamma_{\!\! \rm{cos}} (s) f_c(s, \chi) \quad \mbox{or} \quad f_c(1-s, \chi) \ne \Gamma_{\!\! \rm{sin}} (s) f_c(s, \chi),
\qquad c \ne 1.
\]
Therefore, there are no real zeros of $f(s,\chi)$ with $\sigma \ge 1/2$ owing to the functional equations (\ref{eq:gDH1fe}) and (\ref{eq:gDH2fe}). However, due to (iv) of Theorem \ref{th:gDH1}, the function $f(s,\chi)$ with a non-real character $\chi$ has non-real zeros in the half-plane $\sigma >1/2$ despite the existence of the functional equations (\ref{eq:gDH1fe}) and (\ref{eq:gDH2fe}). In other words, the functional equations (\ref{eq:gDH1fe}) and (\ref{eq:gDH2fe}) forbid the existence of real zeros of $f (s,\chi)$ but allow the existence of non-real zeros of $f (s,\chi)$ when $\chi$ is non-real. 
\end{remark}

\begin{proof}[{Proof of Corollary \ref{cor:DH2}}]
Clearly, the function $q^{-s} f (s,\chi)$ can be expressed as an ordinary Dirichlet series, namely, satisfies (H1). By the definition of $H(s,q)$,  we have $H(1-s,q) = H(s,q)$ and all poles of $H(s,q)$ are on the vertical line $\sigma =1/2$. Moreover, one has
\[
q^s H(s,q) = \frac{q^s}{q^s+q^{1-s}} = \frac{1}{1+q^{1-2s}} = \sum_{k=0}^\infty \frac{(-q)^k}{q^{2ks}}
\]
if $\sigma > 1/2$. Therefore, the function $g (s,\chi) = q^{-s} f (s,\chi) \cdot q^s H (s,q)$ can be also expressed as an ordinary Dirichlet series. Hence we have Corollary \ref{cor:DH2}. 
\end{proof}

\subsection*{Acknowledgments}
The author was partially supported by JSPS grant 16K05077. 

 
\end{document}